\documentclass[letterpaper, 10 pt, conference]{ieeeconf}%

\IEEEoverridecommandlockouts%

\usepackage{graphics}%
\usepackage{epsfig}%
\usepackage{graphicx}%
\usepackage{times}%
\usepackage{amsmath}%
\usepackage{amssymb}%
\usepackage{algorithm,algorithmic}
\usepackage{subcaption}
\usepackage{url}

\title{\LARGE \bf
Major-Minor LQ Mean Field Games with Erroneous Initial Information: Distributed Error Estimation and Strategy Modification 
}

\author{Yuxin Jin, Haotian Wang, Wang Yao\textsuperscript{*} and Xiao Zhang\textsuperscript{*}%
\thanks{* This work was supported by the National Natural Science Foundation of China (NSFC Grant No.12441101).(Corresponding author: Wang Yao and Xiao Zhang).}%
\thanks{This work has been submitted to the IEEE for possible publication. Copyright may be transferred without notice, after which this version may no longer be accessible.}
\thanks{Y. Jin thanks Prof. Markus Fischer and Prof. Alekos Cecchin for helpful discussions. Y. Jin also gratefully acknowledges the financial support from the China Scholarship Council (CSC).}
\thanks{Y. Jin is with ShenYuan Honors College and School of Mathematical Sciences, Beihang University, Beijing 100191, China; Key Laboratory of Mathematics, Informatics and Behavioral Semantics, Ministry of Education, Beihang University, Beijing 100191, China (e-mail: yxjin@buaa.edu.cn).}
\thanks{H. Wang is with School of Mathematical Sciences, Beihang University, Beijing 100191, China (e-mail: williamelwht@buaa.edu.cn).}%
\thanks{W. Yao is with School of Artificial Intelligence and LMIB, Beihang University, Beijing 100191, China; Hangzhou International Innovation Institute of Beihang University, Hangzhou 311115, China (e-mail: yaowang@buaa.edu.cn).}%
\thanks{X. Zhang is with the School of Mathematical Sciences, Beihang University, Beijing 100191, China; Key Laboratory of Mathematics, Informatics and Behavioral Semantics, Ministry of Education, Beihang University, Beijing 100191, China (e-mail: xiao.zh@buaa.edu.cn).}%
}

\begin{document}
\maketitle
\thispagestyle{empty}
\pagestyle{empty}
\newtheorem{remark}{Remark}[section]
\newtheorem{lemma}{Lemma}[section]
\newtheorem{theorem}{Theorem}[section]
\newtheorem{problem}{Problem}[section]
\newtheorem{assumption}{Assumption}[section]
\newtheorem{proposition}{Proposition}[section]
\newcommand{\proofsketchqed}{\hfill\vrule height 1.2ex width 0.9ex depth 0pt}

\newenvironment{proofsketch}
{\par\noindent\textit{Proof sketch: }\ignorespaces}
{\proofsketchqed\par}
\begin{abstract}
This paper studies major-minor linear-quadratic mean field games (MMLQMFGs) with erroneous initial information under a constrained observation structure. Each minor agent observes only its own state and the major agent’s state, while the major agent observes its own state and the states of a subset of minor agents; neither side observes the mean field state directly. We show that the initial-information errors propagate linearly through the game dynamics and lead to explicit deviations in the major state, the actual mean field, and the agents’ internally updated mean field states. Based on this structure, we formulate distributed error identification as a parameter-estimation problem from discrete-time local observations and construct maximum-likelihood estimators for unknown initial errors. We then propose an estimate-based strategy modification at an intermediate time by reconstructing the current mean field from the estimated errors and switching to the corresponding control law. We also characterize the resulting estimation errors and show that, in the present symmetric setting, the major agent's estimation precision depends on the number of observed minor agents but not on their identities. Numerical results illustrate the proposed method.
\end{abstract}

\section{Introduction}

Mean field game (MFG) theory provides a tractable framework for studying strategic interactions in very large populations of weakly coupled agents. Since the foundational works of Lasry and Lions and of Huang, Caines, and Malham\'e, it has become a tool for decentralized decision making in large-scale stochastic systems~\cite{lasry2007}-\cite{carmona2018}. Among its extensions, major-minor mean field games (MM-MFGs) are of particular interest. In an MM-MFG, one major agent has an asymptotically non-vanishing influence on the population, whereas each minor agent is individually negligible; consequently, the limiting mean field is typically stochastic through its dependence on the major state~\cite{huang2010major}-\cite{huang2021major}.

In many applications, however, the information assumptions used in standard MFG models are idealized. In particular, the mean field is often not directly observable, and agents may have access only to local or partial observations. This has led to a growing literature on partially observed and information-constrained MFGs, including settings with partial observations and related imperfect-information models~\cite{sen2016major}-\cite{shmaya2025}.

This paper focuses on a different but related issue: agents may start with erroneous initial information on the mean field and be unable to observe the true mean field later in real time. Such errors may come from inaccurate initialization, communication delays, or stale broadcasts~\cite{zhang2020}-\cite{yates2021}. In the homogeneous-population setting, erroneous initial information under partial and discrete observations has been studied in~\cite{jin2025iet}. The corresponding major-minor case, however, is still open.

In our model, each minor agent observes only its own state and the major agent’s state, while the major agent observes its own state and the states of a subset of minor agents. Neither side observes the mean field state directly. Under correct initial information, agents can recursively update the mean field state from the observed major state. Under erroneous initial information, this internal update becomes biased, which further affects the control actions and the system evolution.

In this paper, we study a major-minor linear-quadratic mean field game under erroneous initial information and a constrained observation structure. The main contributions are as follows.

\begin{enumerate}
    \item We derive an explicit linear propagation law for the effects of the initial information errors on the major state, the actual mean field, and the agents’ internally updated mean-field states.
    
    \item Based on this structure, we reformulate distributed error identification as a parameter estimation problem from discrete-time local observations. Each minor agent estimates its own private error together with the common error components, while the major agent estimates the common components.
    
    \item We propose an estimate-based one-shot strategy modification at an intermediate time. Each agent reconstructs the current mean field from the estimated errors and switches to the corresponding control law.
    
    \item We further analyze the resulting estimators. We characterize the estimation errors for both major and minor agents, and show that, for the major agent, the estimation precision depends on the number of observed minor agents rather than their identities.
\end{enumerate}

The rest of the paper is organized as follows. Section~II reviews the benchmark MMLQMFG under correct information. Section~III studies the effect of erroneous initial information and derives the linear error-propagation relations. Section~IV formulates the corresponding likelihood-based parameter-estimation problem. Section~V develops the estimate-based mean-field reconstruction and strategy modification scheme. Section~VI analyzes the estimation errors. Section~VII presents a numerical example.

\section{Major-Minor Linear Quadratic Mean Field Game under Correct Information}
In this section, we introduce the MMLQMFG model under correct initial information. We consider a game with $N$ minor agents, $\mathcal{A}_i,1\leq i\leq N$, $N\rightarrow \infty$ and one major agent $\mathcal{A}_0$. We derive the open-loop equilibrium and give the open-loop equilibrium system together with the induced feedback control law for agents \cite{huang2010major,ma2020major,huang2021major}.
\subsection{Information Structure}
Let $(\Omega,\mathcal F,\mathbb{P})$ support a $d_0$-dimensional Brownian motion $W^0$ and, for each minor agent $\mathcal{A}_i$, a $d$-dimensional Brownian motion $W^i$, where $W^0$ and $W^i, 1\leq i\leq N$ are independent from each other. Let
\[\mathbb{F}^0=(\mathcal{F}_t^0)_{t\in[0,T]},\mathcal{F}_t^0:=\sigma(W_s^0:0\le s\le t).\]
Each minor agent $\mathcal{A}_i$ observes its own state and the major $\mathcal A_0$'s state, and the major $\mathcal{A}_0$ observes its own state and a subset of minors' states $\{X^k\}_{k\in\mathcal{I}_{obs}}$, where $X^k_t$ represents $\mathcal{A}_k$'s state at time $t$, and $\mathcal{I}_{obs}$ is a fixed index set. At the initial moment, agents get the initial average state of minor agents. 

\subsection{Dynamics and Cost Functionals}
Let $(X^i_t)_{0\leq t\leq T}$ be the state of $\mathcal{A}_i$, and $x_0^i$ represents the initial state of $\mathcal{A}_i$, $0\leq i\leq N$. The dynamics of minor agent $\mathcal{A}_i$ is given by
\begin{equation}
\label{eq:Xi}
\begin{split}
dX_t^i = \big(AX_t^i + G X_t^0 + F z_t + B u_t^i\big)dt + DdW_t^i, X_0^i=x_0^i,
\end{split}
\end{equation}
and the dynamics of the major agent $\mathcal{A}_0$ is
\begin{equation}
\label{eq:X0}
\begin{split}
dX_t^0 = \big(A_0X_t^0+F_0 z_t + B_0 u_t^0\big)dt + D_0dW_t^0, X_0^0=x_0^0,
\end{split}
\end{equation}
where $X_t^0\in\mathbb{R}^{d_0}, X_t^i\in\mathbb{R}^d$, $A, B, G, F, D, A_0, B_0$, $F_0, D_0$ are matrices of suitable sizes, $z_t:=1/N\Sigma_{j=1}^N X^j_t$ and $\bar{u}_t:=1/N\Sigma_{j=1}^N u^j_t$ are the mean field state and mean field control. Define $\mathcal{I}_{\mathrm{obs}}$ as the set of minors' index whose state is observable by $\mathcal{A}_0$. The admissible control $u^i_t$ is in $\mathcal{U}^i:=L^2_{\mathcal{G}^i_t}(0,T;\mathbb{R}^m)$, where
\[\mathcal{G}_t^i:=\sigma(z_0, X_s^i,X_s^0:0\leq s\leq t), \]
and $u^0_t$ is in $\mathcal{U}^0:=L^2_{\mathcal{G}^0_t}(0,T;\mathbb{R}^{m_0})$, where
\[\mathcal{G}^0_t:=\sigma(z_0, X_s^0,\{X_s^k\}_{k\in\mathcal{I}_{\mathrm{obs}}}:0\le s\le t).\]

The cost functional of minor agent $\mathcal{A}_i$ is 
\begin{equation}
\label{eq:Ji}
\begin{split}
&J(u^i; z)=\frac{1}{2}\mathbb{E}[\int_{0}^{T}[\|u^i_t\|_R^2+\|X^i_t-(\Gamma z_t+\Lambda X_t^0+\eta)\|_Q^2]\\
&dt+\|X^i_T-(\bar{\Gamma}z_T+\bar{\Lambda}X_T^0+\bar{\eta})\|_{\bar{Q}}^2],
\end{split}
\end{equation}
and the cost functional of major agent $\mathcal{A}_0$ is
\begin{equation}
\label{eq:J0}
\begin{split}
&J_0(u^0;\bar{u})=\frac{1}{2}\mathbb{E}[\int_{0}^{T}[\|u^0_t\|_{R_0}^2+\|X^0_t-(\Gamma_0 z_t+\eta_0)\|_{Q_0}^2]dt\\
&+\|X^0_T-(\bar{\Gamma}_0z_T+\bar{\eta}_0)\|_{\bar{Q}_0}^2],
\end{split}
\end{equation}
where we define $\|X\|_Q^2=X^\top QX$. $Q, R, \bar{Q}, \Gamma, \bar{\Gamma}, Q_0, R_0$, $\bar{Q}_0, \Gamma_0, \bar{\Gamma}_0$ are square matrices, and $R,R_0$ are positive definite. $\eta,\bar{\eta}, s, \bar{s}$ are constant vectors. $Q, \bar{Q}, Q_0, \bar{Q}_0\succeq 0$.
\subsection{Optimal Control and Equilibrium}
As $N\rightarrow \infty$, for given continuous $\mathcal{F}^0_t$-adapted process $z$ and $X^0$, according to the stochastic maximum principle \cite{yong1999,oksendal2003}, the optimal control of a minor agent $\mathcal{A}_i$ is $u^i_t=-R^{-1}B^\top p_t^i$, where $p^i$ is the solution to the following adjoint equation:
\begin{equation}\label{eq:pi}
\left\{
\begin{split}
&dp_t^i=-(A^\top p_t^i+Q(X_t^i-\Gamma z_t-\Lambda X^0_t-\eta))dt+Z^i_tdW^i_t\\
&+Z_t^{i,0}dW_t^0, p_T^i=\bar{Q}(X_T^i-\bar{\Gamma} z_T-\bar{\Lambda} X_T^0-\bar{\eta}).
\end{split}
\right.
\end{equation}
Select $X$ as the state of a representative agent $\mathcal{A}$, and $p$ is the corresponding adjoint process, $\bar{p}_t=\mathbb{E}[p_t|\mathcal{F}_t^0]$, then according to the definition of $z$, when minor agents take their optimal control,
\[z_t=\mathbb{E}[X_t|\mathcal{F}^0_t], \bar{u}_t=-R^{-1}B^\top \bar{p}_t, \]
and combining with \eqref{eq:Xi}, we have
\begin{equation}\label{eq:z}
dz_t=\big((A+F)z_t+GX_t^0+B\bar{u}_t\big)dt, z_0=z_0.
\end{equation}
For given $\bar{u}$, the major agent $\mathcal{A}_0$ optimizes $J_0$ subject to:
\begin{equation}\label{eq:X0_z}
\left\{
\begin{split}
&d\binom{X_t^0}{z_t}=\begin{pmatrix}
A_0 & F_0 \\
G & A+F
\end{pmatrix}
\binom{X_t^0}{z_t}+\begin{pmatrix}
B_0 & 0\\
0 & B
\end{pmatrix}\binom{u^0_t}{\bar{u}_t}dt\\
&+\binom{D_0}{0}dW_t^0,\binom{X_0^0}{z_0}=\binom{x_0^0}{z_0}.
\end{split}
\right.
\end{equation}
Let $\zeta_t:=((X_t^0)^\top,z_t^\top)^\top$. Then according to the stochastic maximum principle \cite{yong1999,oksendal2003}, the optimal control of the major agent $\mathcal{A}_0$ is $u^0_t=-R_0^{-1}(B_0^\top,0)p_t^0$, where $p^0$ is the solution to the following adjoint equation:
\begin{equation}\label{eq:p0}
\left\{
\begin{split}
&dp_t^0=-(A_1^\top p_t^0+\Pi_0^\top Q_0(\Pi_0\zeta_t-\eta_0))dt+Z^0_tdW^0_t,\\
&p_T^0=\bar{\Pi}_0^\top \bar{Q}_0(\bar{\Pi}_0\zeta_T-\bar{\eta}_0),
\end{split}
\right.
\end{equation}
where
\[A_1=\begin{pmatrix}
A_0 & F_0\\
G & A+F
\end{pmatrix},\Pi_0^\top=\begin{pmatrix}
I_{d_0}\\
-\Gamma_0^\top
\end{pmatrix},\bar{\Pi}_0^\top=\begin{pmatrix}
I_{d_0}\\
-\bar{\Gamma}_0^\top
\end{pmatrix}.\]
When all agents take their optimal control, we have the following open-loop equilibrium equations:
\begin{equation}\label{eq:equilibrium}
\left\{
\begin{split}
&dz_t=\big((A+F)z_t+GX_t^0-BR^{-1}B^\top\bar{p}_t\big)dt,\\
&dX_t^0=\big(A_0X_t^0+F_0z_t-B_0R_0^{-1}(B_0^\top,0)p_t^0\big)dt+D_0dW_t^0,\\
&d\bar{p}_t=-(A^\top \bar{p}_t+Q((I_d-\Gamma) z_t-\Lambda X^0_t-\eta))dt+Z_tdW^0_t,\\
&dp_t^0=-(A_1^\top p_t^0+\Pi_0^\top Q_0(\Pi_0\zeta_t-\eta_0))dt+Z^0_tdW^0_t,\\
&z_0=z_0,X_0^0=x_0^0,\bar{p}_T=\bar{Q}((I_d-\bar{\Gamma})z_T-\bar{\Lambda} X_T^0-\bar{\eta}),\\
&p_T^0=\bar{\Pi}_0^\top \bar{Q}_0(\bar{\Pi}_0\zeta_T-\bar{\eta}_0).
\end{split}
\right.
\end{equation}
Setting $\mathcal{Y}_t:=({p_t^0}^\top,\bar{p}_t^\top)^\top$, we have $\mathcal{Y}_t=P_t\zeta_t+\mathcal{G}_t$, where
\begin{equation}\label{eq:P}
\left\{
\begin{split}
&-dP_t=(\mathbb{A}^\top P_t+P_tA_1-P_t\mathbb{M}P_t+\mathcal{Q})dt, P_T=\bar{\mathcal{Q}},\\
&d\mathcal{G}_t=-\big((\mathbb{A}^\top-P_t\mathbb{M})\mathcal{G}_t-\nu)dt,\mathcal{G}_T=\bar{\nu},
\end{split}
\right.
\end{equation}
and
\begin{equation}
\begin{split}
&\mathbb{A}=\begin{pmatrix}
A_0 & F_0 &0\\
G & A+F & 0\\
0 & 0 & A\end{pmatrix}, \mathcal{Q}=\begin{pmatrix}
Q_0 & -Q_0\Gamma_0\\
-\Gamma_0^\top Q_0 & \Gamma_0^\top Q_0\Gamma_0\\
-Q\Lambda & -Q(\Gamma-I_d)\end{pmatrix},\\
&\mathbb{M}=\begin{pmatrix}
B_0R_0^{-1}B_0^\top &0 &0\\
0 &0 &BR^{-1}B^\top\end{pmatrix}, \nu=\begin{pmatrix}
\Pi_0^\top Q_0\eta_0\\
Q\eta\end{pmatrix},\\
&\bar{\mathcal{Q}}=\begin{pmatrix}
\bar{Q}_0 & -\bar{Q}_0\bar{\Gamma}_0\\
-\bar \Gamma_0^\top \bar Q_0 & \bar \Gamma_0^\top \bar Q_0\bar \Gamma_0\\
-\bar Q\bar \Lambda & -\bar Q(\bar \Gamma-I_d)\end{pmatrix}, \bar \nu=\begin{pmatrix}
\bar \Pi_0^\top \bar Q_0\bar \eta_0\\
\bar Q\bar \eta\end{pmatrix}.
\end{split}
\end{equation}
Perform matrix partitioning on $P_t$ and $\mathcal{G}_t$:
\[P_t=\begin{pmatrix}
P^{11}_t & P^{12}_t\\
P^{21}_t & P^{22}_t\\
P^{31}_t & P^{32}_t\end{pmatrix}, \mathcal{G}_t=\begin{pmatrix}
\mathcal{G}^{1}_t\\
\mathcal{G}^{2}_t\\
\mathcal{G}^{3}_t\\\end{pmatrix},\]
where $P^{11}_t$ is $d_0\times d_0$ matrix function, $P^{21}, P^{31}$ are $d\times d_0$ matrix functions, we have
\begin{equation}\label{eq:feedback_z_x0}
\left\{
\begin{split}
&u_t^0=-R_0^{-1}B_0^\top(P^{11}_tX_t^0+P^{12}_t z_t+\mathcal{G}^1_t),\\
&\bar{u}_t=-R^{-1}B^\top(P^{31}_tX_t^0+P^{32}_t z_t+\mathcal{G}^3_t).
\end{split}
\right.
\end{equation}
According to \eqref{eq:pi}, we have 
\begin{equation}\label{eq:Px}
\left\{
\begin{split}
&u^i_t=-R^{-1}B^\top(P^x_tX_t^i+(P_t^{32}-P_t^x)z_t+P_t^{31}X_t^0+\mathcal{G}_t^3),\\
&-dP_t^x=(A^\top P_t^x+P_t^xA-P_t^xBR^{-1}B^\top P_t^x+Q)dt,\\
&P_T^x=\bar{Q}.
\end{split}
\right.
\end{equation}
We assume Riccati equations in \eqref{eq:P} and \eqref{eq:Px} have a unique solution $P, P^x$ on $t\in[0,T]$. 
\subsection{Agents' Behavior}
In our setting, at time $t$, agents can get the state of $\mathcal{A}_0$ denoted by $X_t^0$, but can not observe the mean field state $z_t$ directly. Applying the optimal control law \eqref{eq:feedback_z_x0}, we can rewrite \eqref{eq:z} as the following closed-loop form:
\begin{equation}\label{eq:close_z}
\left\{
\begin{split}
&dz_t=\big((A+F-BR^{-1}B^\top P_t^{32})z_t+(G-BR^{-1}B^\top \\
&P_t^{31})X_t^0-BR^{-1}B^\top \mathcal{G}_t^3\big)dt, z_0=z_0.
\end{split}
\right.
\end{equation}

Then by knowing $z_0$, each agent can update and calculate $z_t$ by observed $X_t^0$ and calculated $z_{t-}$. 
At time $t=0$, a minor agent $\mathcal{A}_i$ computes $P,\mathcal{G}, P^x$, the major agent $\mathcal{A}_0$ computes $P,\mathcal{G}$. At time $t>0$, $\mathcal{A}_i$ gets $X_t^i$, $X_t^0$, updates $z_t$, $\mathcal{A}_0$ gets $X_t^0$, updates $z_t$, and all agents apply their optimal control laws.

\section{Major-Minor Linear Quadratic Mean Field Game under Erroneous Initial Information}
In this section, we introduce the MMLQMFG model under erroneous initial information. We consider the initial information error on the mean field state $z_0$:
\begin{equation}\label{eq:error}
z_0^i=z_0+e^i, z_0^0=z_0+e^0, i=1,...,N,
\end{equation} 
where $z_0^i$ is the initial mean field state observed by a minor agent $\mathcal{A}_i$ with deviation $e^i$, and $z_0^0$ is the initial mean field state observed by the major agent with deviation $e^0$, $e^i, e^0\in \mathbb{R}^d$. Define $\bar{e}=\lim_{N\rightarrow\infty}\frac{1}{N}\sum_{i=1}^N e^i$.

We analyze how the initial errors influence the agents' control law and the evolution of the system.
\subsection{Basic Settings}
We keep the information settings in the last section. Since the initial observations on $z_0$ are erroneous now, the admissible control sets are changed. The admissible control set of a minor agent $\mathcal{A}_i$ is $\mathcal{U}^i:=L^2_{\mathcal{G}_t^i}(0,T;\mathbb{R}^m)$, where
\[\mathcal{G}_t^i:=\sigma(z_0^i,X_s^i,X_s^0:0\leq s\leq t),\]
and that of $\mathcal{A}_0$ is $\mathcal{U}^0:=L^2_{\mathcal{G}^0_t}(0,T;\mathbb{R}^{m_0})$, where
\[\mathcal{G}^0_t:=\sigma(z_0^0, X_s^0,\{X_s^k\}_{k\in\mathcal{I}_{\mathrm{obs}}}:0\le s\le t).\]
The mean field state $z$ in the cost functional of a minor agent $\mathcal{A}_i$ is now replaced by $z^i$, where $z^i$ represents the mean field state updated by $\mathcal{A}_i$, and $z$ in the cost functional of $\mathcal{A}_0$ is now replaced by $z^0$, where $z^0$ represents the mean field state updated by $\mathcal{A}_0$.
\subsection{Agents' Behavior}
The initial error will not influence $P, \mathcal{G}, P^x$, and these processes can be calculated according to system parameters. In $\mathcal{A}_i$'s opinion, $z^i$ is the correct mean field state, so the feedback control law of a minor agent $\mathcal{A}_i$ now is 
\begin{equation}\label{eq:ui_error}
u_t^i=-R^{-1}B^\top(P_t^xX_t^i+(P_t^{32}-P_t^x)z_t^i+P_t^{31}X_t^0+\mathcal{G}_t^3),
\end{equation}
and the feedback control law of $\mathcal{A}_0$ is
\begin{equation}\label{eq:u0_error}
u_t^0=-R_0^{-1}B_0^\top(P_t^{11}X_t^0+P_t^{12}z_t^0+\mathcal{G}_t^1).
\end{equation}
The update law of $z^j, j=0,...,N$ becomes
\begin{equation}\label{eq:close_zj}
\left\{
\begin{split}
&dz^j_t=\big((A+F-BR^{-1}B^\top P_t^{32})z^j_t+(G-BR^{-1}B^\top \\
&P_t^{31})X_t^0-BR^{-1}B^\top \mathcal{G}_t^3\big)dt, z^j_0=z_0+e^j.
\end{split}
\right.
\end{equation}
Then the behavior of agents can be described as follows:
 
At time $t=0$, a minor agent $\mathcal{A}_i$ computes $P,\mathcal{G}, P^x$, the major agent $\mathcal{A}_0$ computes $P,\mathcal{G}$. At time $t>0$, $\mathcal{A}_i$ gets $X_t^i$, $X_t^0$, updates $z_t^i$, $\mathcal{A}_0$ gets $X_t^0$, updates $z_t^0$, and all agents apply their optimal control laws \eqref{eq:ui_error} and \eqref{eq:u0_error}.
\subsection{Initial Error Effect}
Denote the mean field state under correct information by $z^c$, the state of major agent under correct information by $X^{0,c}$, the state of a minor agent $\mathcal{A}_i$ under correct information by $X^{i,c}$. Let $M:=BR^{-1}B^\top, M_0:=B_0R_0^{-1}B_0^\top$.

The initial errors first influence the updated mean field state, then agents' control and states, finally the actual mean field state. According to the definition of $z$, we have
\begin{equation}\label{eq:equilibrium_error}
\left\{
\begin{split}
&dX_t^0=\big((A_0-M_0 P^{11}_t)X_t^0-M_0 P_t^{12}z^0_t+F_0z_t-M_0 \\
&\mathcal{G}_t^1\big)dt+D_0dW_t^0, X_0^0=x_0^0,\\
&dz_t^0=\big((A+F-M P_t^{32})z^0_t+(G-MP_t^{31})X_t^0-M \\
&\mathcal{G}_t^3\big)dt,z^0_0=z_0+e^0,\\
&d\bar{z}_t=\big((A+F-M P_t^{32})\bar{z}_t+(G-M P_t^{31})X_t^0-M \\
&\mathcal{G}_t^3\big)dt, \bar{z}_0=z_0+\bar{e},\\
&dz_t=\big((A+F-M P_t^x)z_t+(G-M P_t^{31})X_t^0-M\\
&(P_t^{32}-P_t^x)\bar{z}_t-M\mathcal{G}_t^3\big)dt, z_0=z_0.
\end{split}
\right.
\end{equation}
Set $\delta X_t^0= X_t^0-X_t^{0,c}$, $\delta z_t^0= z_t^0-z_t^{c}$, $\delta \bar{z}_t= \bar{z}_t-z_t^{c}$, $\delta z_t= z_t-z_t^{c}$ and $\mathcal{X}_t:=((\delta X_t^0)^\top, (\delta z_t^0)^\top, (\delta \bar{z}_t)^\top, (\delta z_t)^\top)^\top$. Then compare \eqref{eq:equilibrium_error} with \eqref{eq:close_z} and \eqref{eq:equilibrium}, we have
\begin{equation}\label{eq:deviate_x0_z}
d\mathcal{X}_t=\mathbb{A}^0_t\mathcal{X}_tdt, \mathcal{X}_0=(0^\top, (e^0)^\top, (\bar{e})^\top,0^\top)^\top,
\end{equation}
where we set $A^{22}_t:=A+F-MP^{32}_t$, $A^{44}_t:=A+F-MP_t^x$, and
\[\mathbb{A}_t^0=\begin{pmatrix}
A_0-M_0P^{11}_t & -M_0P^{12}_t &0 & F_0\\
G-MP^{31}_t & A^{22}_t &0 &0\\
G-MP^{31}_t & 0 & A^{22}_t &0\\
G-MP^{31}_t &0 & -M(P_t^{32}-P_t^x) & A^{44}_t
\end{pmatrix}\]
Solving \eqref{eq:deviate_x0_z}, we have
\begin{equation}\label{eq:Phi}
\left\{
\begin{split}
&\delta X_t^0=\Phi_t^{12}e^0+\Phi_t^{13}\bar{e},\delta z_t^0=\Phi_t^{22}e^0+\Phi_t^{23}\bar{e},\\
&\delta \bar{z}_t=\Phi_t^{32}e^0+\Phi_t^{33}\bar{e}, \delta z_t=\Phi_t^{42}e^0+\Phi_t^{43}\bar{e}.\\
\end{split}
\right.
\end{equation}
Where
\[\Phi_t=\begin{pmatrix}
\Phi_t^{11} &\Phi_t^{12} &\Phi_t^{13} &\Phi_t^{14}\\
\Phi_t^{21} &\Phi_t^{22} &\Phi_t^{23} &\Phi_t^{24}\\
\Phi_t^{31} &\Phi_t^{32} &\Phi_t^{33} &\Phi_t^{34}\\
\Phi_t^{41} &\Phi_t^{42} &\Phi_t^{43} &\Phi_t^{44}\\
\end{pmatrix},\]
and
\begin{equation}
d\Phi_t=\mathbb{A}_t^0\Phi_t dt,\Phi_0=I_{3d+d_0}.
\end{equation}
Denote the deviation of $z^i_t$ from $z^c_t$ by $\delta z^i_t$. Then we have
\begin{equation}\label{eq:deviate_zi}
d\delta z^i_t=(A+F-MP^{32}_t)\delta z^i_t+(G-MP^{31}_t)\delta X_t^0 dt, \delta z_0^i=e^i.
\end{equation}  
Applying the method of variation of parameters (see, e.g., \cite{yong1999,oksendal2003}) to solve the above equation, we have
\[\delta z^i_t=\Phi_t^x(e^i+\mathcal{M}_t^0e^0+\bar{\mathcal{M}}_t\bar{e}),\]
where
\begin{equation}\label{eq:Phi_x}
\left\{
\begin{split}
&d\Phi^x_t=(A+F-MP^{32}_t)\Phi_t^xdt,\Phi_0^x=I_{d},\\
&\mathcal{M}_t^0=\int_0^t(\Phi_s^x)^{-1}(G-MP^{31}_s)\Phi^{12}_sds,\\ &\bar{\mathcal{M}}_t=\int_0^t(\Phi_s^x)^{-1}(G-MP^{31}_s)\Phi^{13}_sds.
\end{split}
\right.
\end{equation}
Summarizing the above conclusions, we have the following theorem
\begin{theorem}\label{thm:deviate}
The deviations $\delta X_t^0, \delta \bar{z}_t, \delta z_t, \delta z_t^0$ have linear relationships with the initial errors $e^0$ and $\bar{e}$, and $\delta z_t^i$ has a linear relationship with the initial errors $e^i,e^0$ and $\bar{e}$. These linear relationships are listed as follows:
\begin{equation}
\left\{
\begin{split}
&\delta X_t^0=\Phi_t^{12}e^0+\Phi_t^{13}\bar{e},\delta z_t^0=\Phi_t^{22}e^0+\Phi_t^{23}\bar{e},\\
&\delta \bar{z}_t=\Phi_t^{32}e^0+\Phi_t^{33}\bar{e}, \delta z_t=\Phi_t^{42}e^0+\Phi_t^{43}\bar{e},\\
&\delta z^i_t=\Phi_t^x(e^i+\mathcal{M}_t^0e^0+\bar{\mathcal{M}}_t\bar{e}).
\end{split}
\right.
\end{equation}
\end{theorem}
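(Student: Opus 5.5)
The argument subtracts the correct-information closed-loop system from the erroneous one, obtains a homogeneous linear ODE for the deviations, and solves it with the fundamental matrix $\Phi$ and variation of parameters. The key point is that both systems are driven by the same Brownian path $W^0$ with the same diffusion coefficient $D_0$, and $P,\mathcal{G},P^x$ do not depend on the initial data. So in the difference all stochastic terms, all $\mathcal{G}$ terms and all constant forcing terms cancel, and the deviations solve a random but pathwise deterministic linear ODE.

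\textbf{Step 1 (derive the deviation system).} Under correct information, the closed-loop system \eqref{eq:close_z} with \eqref{eq:feedback_z_x0} gives $X^{0,c}$ and $z^c$. In that case all internal estimates coincide with $z^c$, and the actual mean field also equals $z^c$. For the actual mean field, I average the minor dynamics \eqref{eq:Xi} under the feedback law \eqref{eq:ui_error} and use $\frac1N\sum_i z^i_t\to\bar z_t$. This uses linearity of \eqref{eq:close_zj} in its initial datum, since averaging the $z^i$ gives the same ODE started from $z_0+\bar e$. The result is the last line of \eqref{eq:equilibrium_error}. I then write the correct-information counterparts: the $X^0$ equation with $z^0$ and $z$ both replaced by $z^c$, and the $z$ equation with $\bar z$ replaced by $z^c$. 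These coincide with \eqref{eq:close_z} because $A+F-MP^x-M(P^{32}-P^x)=A^{22}_t$. Subtracting line by line gives $d\mathcal X_t=\mathbb A^0_t\mathcal X_t\,dt$ with $\mathcal X_0=(0,e^0,\bar e,0)$, which is \eqref{eq:deviate_x0_z}. The main obstacle is checking this bookkeeping, in particular the $F_0(z_t-z^c_t)$ and $-M(P^{32}_t-P^x_t)\delta\bar z_t$ entries.

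\textbf{Step 2 (solve the deviation system).} Since $P$ and $P^x$ are continuous on $[0,T]$ by the standing assumption, $\mathbb A^0_t$ is continuous. The fundamental matrix $\Phi$ therefore exists and is unique, and $\mathcal X_t=\Phi_t\mathcal X_0$. Reading off the block rows, and noting that the first and fourth block components of $\mathcal X_0$ vanish, gives the first two lines of the theorem. Only the columns $\Phi^{\cdot2}$ and $\Phi^{\cdot3}$ survive.

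\textbf{Step 3 (minor agent's internal estimate).} Subtracting \eqref{eq:close_z} from \eqref{eq:close_zj} for $j=i$ gives \eqref{eq:deviate_zi}. This is a linear ODE driven by $(G-MP^{31}_t)\delta X^0_t$, where $\delta X^0_t$ is already known from Step 2. Let $\Phi^x$ be as in \eqref{eq:Phi_x}; it is invertible as the fundamental matrix of a linear ODE. Variation of parameters gives
\[
\delta z^i_t=\Phi^x_t\Big(e^i+\int_0^t(\Phi^x_s)^{-1}(G-MP^{31}_s)\big(\Phi^{12}_se^0+\Phi^{13}_s\bar e\big)ds\Big).
\]
Splitting the integral according to $e^0$ and $\bar e$ gives $\mathcal M^0_t e^0+\bar{\mathcal M}_t\bar e$. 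This establishes the last relation, and linearity in $(e^i,e^0,\bar e)$ is evident.
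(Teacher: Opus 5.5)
Your proposal is correct and follows essentially the same route as the paper. You subtract the correct-information closed loop from \eqref{eq:equilibrium_error}, using the same $W^0$ and unchanged $P,\mathcal{G},P^x$, to obtain $d\mathcal{X}_t=\mathbb{A}^0_t\mathcal{X}_t\,dt$, read off the surviving columns of $\Phi_t$, and then solve \eqref{eq:deviate_zi} by variation of parameters; your averaging derivation of the actual mean-field equation and the check $A+F-MP^x_t-M(P^{32}_t-P^x_t)=A^{22}_t$ just make explicit bookkeeping that the paper leaves implicit.
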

\section{Estimate Initial Errors via Discrete Information}
In this section, we propose an initial error estimation method based on maximum likelihood estimation (MLE) \cite{casella2002,pawitan2001}. By applying the linear relationships in Section 3, we rewrite the model and transfer the error estimation problem into a parameter estimation problem. 

Consider error estimations at time $\tau, 0<\tau<T$. Set a time sequence $0=t_0<t_1<...<t_k=\tau$. Each minor agent $\mathcal{A}_i$ records the discrete time samples $\{X_{t_s}^i, X^0_{t_s}, z^i_{t_s}, 0\leq s\leq k\}$, and the major agent $\mathcal{A}_0$ records the discrete time samples $\{X_{t_s}^i, X^0_{t_s}, z^0_{t_s}, 0\leq s\leq k, i\in\mathcal{I}_{obs}\}$.

We consider the situation where $\mathcal{A}_i$ estimates $e^i,e^0,\bar{e}$ and $\mathcal{A}_0$ estimates $e^0$ and $\bar{e}$.
\subsection{Linear Parameterization of Initial Errors}
We substitute the error--deviation relationships derived in Section~III into the evolution equations of the observable states, so that all unknown initial errors enter the drift linearly.
\begin{equation}\label{eq:z_parameter}
\left\{
\begin{split}
&z_t^i-z_t=\Phi_t^xe^i+(\Phi_t^x\mathcal{M}_t^0-\Phi_t^{42})e^0+(\Phi_t^x\bar{\mathcal{M}}_t-\Phi_t^{43})\bar{e},\\
&z_t^0-z_t=(\Phi_t^{22}-\Phi_t^{42})e^0+(\Phi_t^{23}-\Phi_t^{43})\bar{e},\\
&z_t^0-z_t^i=(\Phi_t^{22}-\Phi_t^x\mathcal{M}_t^0)e^0+(\Phi_t^{23}-\Phi_t^x\bar{\mathcal{M}}_t)\bar{e}-\Phi_t^x e^i.
\end{split}
\right.
\end{equation}
A minor agent $\mathcal{A}_i$ has access to discrete time samples of $X^i, X^0, z^i$. Rewriting the dynamic equations \eqref{eq:Xi}, \eqref{eq:X0} and the evolution equation of $z^i$ and $z^0$, we have
\begin{equation}\label{eq:part_i}
d\mathcal{X}^i_t=(\mathbb{A}_t^1\mathcal{X}^i_t+\beta_t)dt-\mathbb{K}^1_t\mathcal{E}^idt+\mathbb{D}d\mathbb{W}_t^i,
\end{equation}
where
\begin{equation*}
\begin{split}
&\mathbb{A}_t^1:=\begin{pmatrix}
A-MP_t^x & G-MP_t^{31} & F-M(P_t^{32}-P_t^x)\\
0 & A_0-M_0P_t^{11} & F_0-M_0P_t^{12}\\
0 & G-MP_t^{31}&  A+F-MP_t^{32}
\end{pmatrix},\\
&\mathcal{X}_t^i:=\begin{pmatrix}
X_t^i\\
X_t^0\\
z_t^i
\end{pmatrix},
\beta_t:=\begin{pmatrix}
-M\mathcal{G}_t^3\\
-M_0\mathcal{G}_t^1\\
-M\mathcal{G}_t^3
\end{pmatrix},
\mathbb{D}:=\begin{pmatrix}
D & 0\\
0 & D_0\\
0 & 0
\end{pmatrix},
\end{split}\end{equation*}
\begin{equation*}\begin{split}
&\mathbb{W}_t^i:=\begin{pmatrix}
W_t^i\\
W_t^0
\end{pmatrix},
\mathcal{E}^i:=\begin{pmatrix}
e^i\\
e^0\\
\bar{e}
\end{pmatrix},\mathbb{K}_t^1:=\begin{pmatrix}
F\Phi_t^x & K_t & \bar{K}_t\\
K^{01}_t\Phi_t^x & K^0_t &\bar{K}^0_t\\
0 & 0 & 0
\end{pmatrix},\\
&K_t=F(\Phi_t^x\mathcal{M}_t^0-\Phi_t^{42}), \bar{K}_t=F(\Phi_t^x\bar{\mathcal{M}}_t-\Phi_t^{43}),K^{01}_t:=F_0\\
&-M_0P_t^{12},K^0_t=(F_0-M_0P_t^{12})\Phi_t^x\mathcal{M}_t^0+M_0P_t^{12}\Phi_t^{22}-F_0\\
&\Phi_t^{42},\bar{K}_t^0=(F_0-M_0P_t^{12})\Phi_t^x\bar{\mathcal{M}}_t+M_0P_t^{12}\Phi_t^{23}-F_0\Phi_t^{43}.
\end{split}
\end{equation*}

The major agent has access to discrete time samples of $X^0, X^j, z^0, j\in\mathcal{I}_{obs}$. Let $|\mathcal{I}_{obs}|=q$, $\mathcal{X}^{obs}_t:=((X_t^{j_1})^\top, ..., (X_t^{j_q})^\top)^\top$, $\mathcal{E}^{obs}:=((e^{j_1})^\top, ..., (e^{j_q})^\top)^\top$, $\mathcal{I}_{obs}:=\{j_s, 1\leq s\leq q\}$. Rewriting the dynamics, we have
\begin{equation}\label{eq:par_0}
d\mathcal{X}^0_t=(\mathbb{A}_t^2\mathcal{X}^0_t+\beta_t^0)dt-\mathbb{K}_t^0\mathcal{E}^0dt+\mathbb{D}^0d\mathbb{W}^0_t,
\end{equation}
where
\begin{equation*}
\begin{split}
&\mathcal{X}^0:=\begin{pmatrix}
X_t^0\\
\mathcal{X}_t^{obs}\\
z_t^0
\end{pmatrix},
\mathcal{E}^0:=\begin{pmatrix}
e^0\\
\mathcal{E}^{obs}\\
\bar{e}
\end{pmatrix},
\mathbb{W}^0_t:=\begin{pmatrix}
W_t^0\\
\mathcal{W}_t^{obs}\\
0
\end{pmatrix},\\
&\mathbb{A}_t^2:=\begin{pmatrix}
A_0-M_0P_t^{11} &0 &F_0-M_0P_t^{12}\\
g_t^{obs} & \mathbb{A}_t^{obs} &f_t^{obs}\\
G-MP_t^{31} &0 & A_t^{22}
\end{pmatrix}, \mathcal{W}_t^{obs}:=\\
&\begin{pmatrix}
W_t^{j1}\\
\vdots\\
W_t^{jq}
\end{pmatrix},\beta_t^0:=\begin{pmatrix}
-M_0\mathcal{G}_t^1\\
-I^q M\mathcal{G}_t^3\\
-M\mathcal{G}_t^3
\end{pmatrix},
\mathbb{D}^0:=\begin{pmatrix}
D_0 & 0\\
0 & \mathbb{D}^{obs}\\
0 & 0
\end{pmatrix}, \\
&\mathbb{D}^{obs}:=\mathrm{diag}(D,\ldots,D), \mathbb{A}^{obs}_t:=\mathrm{diag}(a_t^1,\ldots,a_t^1),\\
&\mathbb{K}_t^{obs}:=\mathrm{diag}(k_t^1,\ldots,k_t^1), I^q:=(I_d,\ldots,I_d)^\top_q,\\
&\mathbb{K}_t^0:=\begin{pmatrix}
F_0(\Phi_t^{22}-\Phi_t^{42})&0&F_0(\Phi_t^{23}-\Phi_t^{43})\\
I^q\mathcal{K}_t^1 &\mathbb{K}_t^{obs} &I^q\bar{\mathcal{K}}_t^1\\
0&0&0
\end{pmatrix},\\
&g_t^{obs}:=I^q(G-MP_t^{31}), f_t^{obs}:=I^q(F-M(P_t^{32}-P_t^x)),\\
&a_t^1=(A-MP_t^x), k_t^1=M(P_t^{32}-P_t^x)\Phi_t^x,\mathcal{K}_t^1:=(F-M\\
&(P_t^{32}-P_t^x))\Phi_t^{22}-F\Phi_t^{42}+M(P_t^{32}-P_t^x)\Phi_t^x\mathcal{M}_t^0,\bar{\mathcal{K}}_t^1:=\\
&(F-M(P_t^{32}-P_t^x))\Phi_t^{23}-F\Phi_t^{43}+M(P_t^{32}-P_t^x)\Phi_t^x\bar{\mathcal{M}}_t.
\end{split}
\end{equation*}
\subsection{Parameter Estimation Problem}
According to the last subsection, we can turn the error estimation problem to parameter estimation problems.

For a minor agent $\mathcal{A}_i$, it solves the following problem:
\begin{problem}[Parameter Estimation for $\mathcal{A}_i$]\label{prob:minor}For known observation values $\mathcal{X}^i_{t_s}, 0\leq s \leq k$, estimate $\mathcal{E}^i$. Where $\mathcal{X}^i$ satisfies \eqref{eq:part_i}. 
\end{problem}

For the major player $\mathcal{A}_0$, it solves the following problem:
\begin{problem}[Parameter Estimation for $\mathcal{A}_0$]\label{prob:major}For known observation values $\mathcal{X}^0_{t_s}, 0\leq s \leq k$, estimate $e^0,\bar{e}$. Where $\mathcal{X}^0$ satisfies \eqref{eq:par_0}. 
\end{problem}

Then we derive the form of the transition probability densities of $\mathcal{X}^i$ and $\mathcal{X}^0$ using the standard linear-Gaussian transition formula \cite{oksendal2003,kailath2000,pawitan2001}.
\subsection{Transition Probability Densities}
Solving \eqref{eq:part_i} and \eqref{eq:par_0}, for $0\leq s<t\leq T$, we have
\begin{equation}\label{eq:sol_obs}
\left\{
\begin{split}
&\mathcal{X}_t^i=\Xi_t\big(\Xi_s^{-1}\mathcal{X}_s^i+\int_s^t\Xi_u^{-1}\beta_udu-\int_s^t\Xi_u^{-1}\mathbb{K}_u^1du\mathcal{E}^i\\
&+\int_s^t\Xi_u^{-1}\mathbb{D}d\mathbb{W}_u^i\big),d\Xi_t=\mathbb{A}_t^1\Xi_tdt, \Xi_0=I_{2d+d_0},\\
&\mathcal{X}_t^0=\Psi_t\big(\Psi_s^{-1}\mathcal{X}_s^0+\int_s^t\Psi_u^{-1}\beta^0_udu-\int_s^t\Psi_u^{-1}\mathbb{K}_u^0du\mathcal{E}^0\\
&+\int_s^t\Psi_u^{-1}\mathbb{D}^0d\mathbb{W}_u^0\big),d\Psi_t=\mathbb{A}_t^2\Psi_tdt, \Psi_0=I_{(q+1)d+d_0}.
\end{split}
\right.
\end{equation}
Then we have the following theorem on transition probability densities
\begin{theorem}\label{thm:dens} If $\Sigma(t|s), \Sigma^0(t|s)\succ0$, for given $\mathcal{E}^i$, $\mathcal{E}^0$, $\mathcal{X}^i_t$ conditional on $\mathcal{X}_s^i=x$ is Gaussian, the transition probability density of $\mathcal{X}^i$ from $s$ to $t, t>s$ is
\begin{equation}\label{eq:dens_i}
\begin{split}
&\mathcal{T}(t,y|s,x;\mathcal{E}^i)=\frac{1}{\sqrt{(2\pi)^{2d+d_0}}|\Sigma|}\exp^{-\frac{1}{2}(y-\mu)^\top\Sigma^{-1}(y-\mu)},\\
&\mu(t|s,x;\mathcal{E}^i)=F(t|s,x)-\Xi_t\int_s^t\Xi_u^{-1}\mathbb{K}_u^1du\mathcal{E}^i,\\
&\Sigma(t|s)=\Xi_t\int_s^t\Xi_u^{-1}\mathbb{D}\mathbb{D}^\top\Xi_u^{-\top}du\Xi_t^\top,\\
&F(t|s,x)=\Xi_t\Xi^{-1}_sx+\Xi_t\int_s^t\Xi_u^{-1}\beta_udu,
\end{split}
\end{equation} 
and $\mathcal{X}_t^0$ conditional on $\mathcal{X}^0_s=x$ is Gaussian, the transition probability density of $\mathcal{X}^0$ from $s$ to $t, t>s$ is
\begin{equation}\label{eq:dens_0}
\begin{split}
&\mathcal{T}^0(t,y|s,x;\mathcal{E}^0)=\frac{\exp^{-\frac{1}{2}(y-\mu^0)^\top\Sigma_0^{-1}(y-\mu^0)}}{\sqrt{(2\pi)^{(q+1)d+d_0}}|\Sigma_0|},\\
&\mu^0(t|s,x;\mathcal{E}^0)=F^0(t|s,x)-\Psi_t\int_s^t\Psi_u^{-1}\mathbb{K}_u^0du\mathcal{E}^0,\\
&\Sigma^0(t|s)=\Psi_t\int_s^t\Psi_u^{-1}\mathbb{D}^0(\mathbb{D}^0)^\top\Psi_u^{-\top}du\Psi_t^\top,\\
&F^0(t|s,x)=\Psi_t\Psi^{-1}_sx+\Psi_t\int_s^t\Psi_u^{-1}\beta^0_udu,
\end{split}
\end{equation} 
\end{theorem}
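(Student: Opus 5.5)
Our approach is to read the result directly off the variation-of-constants representation \eqref{eq:sol_obs}. Fix $0\le s<t\le T$ and condition on $\mathcal{X}^i_s=x$. In
\[
\mathcal{X}^i_t=\Xi_t\Xi_s^{-1}x+\Xi_t\int_s^t\Xi_u^{-1}\beta_u\,du-\Xi_t\int_s^t\Xi_u^{-1}\mathbb{K}^1_u\,du\,\mathcal{E}^i+\Xi_t\int_s^t\Xi_u^{-1}\mathbb{D}\,d\mathbb{W}^i_u ,
\]
the first three terms are deterministic once $x$ and $\mathcal{E}^i$ are fixed. This holds because $\Xi$, $\beta$ and $\mathbb{K}^1$ are built only from $P,\mathcal{G},P^x,\Phi,\Phi^x,\mathcal{M}^0,\bar{\mathcal{M}}$, which are deterministic solutions of ODEs determined by the system parameters. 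The only random term is the Wiener integral $I_{s,t}:=\Xi_t\int_s^t\Xi_u^{-1}\mathbb{D}\,d\mathbb{W}^i_u$.

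First, we verify that \eqref{eq:sol_obs} actually solves \eqref{eq:part_i}. Apply It\^o's product rule to $\Xi_t^{-1}\mathcal{X}^i_t$, using $d\Xi_t^{-1}=-\Xi_t^{-1}\mathbb{A}^1_t\,dt$. The matrix $\Xi_t$ is invertible because its Wronskian satisfies Liouville's formula with continuous coefficients.

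Second, we handle the noise. Since $\mathbb{W}^i=(W^i,W^0)$ is a Brownian motion and the integrand $\Xi_u^{-1}\mathbb{D}$ is deterministic and continuous, $I_{s,t}$ is a centered Gaussian vector. It is independent of $\mathcal{F}^{\mathbb{W}^i}_s$, because its increments are taken on $(s,t]$. By the It\^o isometry, its covariance is $\Xi_t\int_s^t\Xi_u^{-1}\mathbb{D}\mathbb{D}^\top\Xi_u^{-\top}du\,\Xi_t^\top=\Sigma(t|s)$.

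Third, we identify the conditional law. The value $\mathcal{X}^i_s$ is a measurable function of $\mathcal{X}^i_0$ and of the noise on $[0,s]$, and the initial data and error vector are treated as given. Hence $I_{s,t}$ is independent of $\mathcal{X}^i_s$. Conditionally on $\mathcal{X}^i_s=x$, the vector $\mathcal{X}^i_t$ is therefore Gaussian with mean $F(t|s,x)-\Xi_t\int_s^t\Xi_u^{-1}\mathbb{K}^1_u du\,\mathcal{E}^i=\mu$ and covariance $\Sigma(t|s)$. The hypothesis $\Sigma(t|s)\succ0$ makes this law nondegenerate on $\mathbb{R}^{2d+d_0}$, so it has the stated Gaussian density. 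In the statement, the normalizing factor should read $\sqrt{(2\pi)^{n}|\Sigma|}$.

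The major-agent case \eqref{eq:par_0} goes the same way, with $\Psi,\beta^0,\mathbb{K}^0,\mathbb{D}^0$ in place of $\Xi,\beta,\mathbb{K}^1,\mathbb{D}$ and dimension $(q+1)d+d_0$. The zero block in $\mathbb{W}^0$ contributes nothing to the covariance, and the Brownian motions $W^0,W^{j_1},\dots,W^{j_q}$ are mutually independent, so the It\^o isometry again gives $\Sigma^0(t|s)$.

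The main obstacle is mild: the $z^i$ (respectively $z^0$) component carries no noise. Positive definiteness of the covariance therefore cannot be taken for granted; it relies on the controllability-type Gramian condition, which is exactly why the statement assumes $\Sigma,\Sigma^0\succ0$. With that assumption in place, the rest is the standard linear-Gaussian transition computation.
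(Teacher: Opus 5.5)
Your proposal is correct and follows the same route as the paper: the paper solves \eqref{eq:part_i} and \eqref{eq:par_0} by variation of constants as in \eqref{eq:sol_obs} and then invokes the standard linear-Gaussian transition formula, and you supply the It\^o-isometry and independence-of-increments details it leaves implicit. Your remark that the normalizing factor should be $\sqrt{(2\pi)^{n}|\Sigma|}$ rather than $\sqrt{(2\pi)^{n}}\,|\Sigma|$ is also right; the typo does not affect the later MLE, since that factor depends on neither $\mathcal{E}^i$ nor $\mathcal{E}^0$.
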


\subsection{Likelihood Function}
According to Theorem \ref{thm:dens}, for given time sequence $0=t_0<t_1<...<t_k=\tau$, we can derive the likelihood function for major and minor agents. We define
\begin{equation}
\begin{split}
&F_s(x):=F(t_{s+1}|t_s,x), H_s:=\Xi_{t_{s+1}}\int_{t_s}^{t_{s+1}}\Xi_u^{-1}\mathbb{K}_u^1du,\\
&\Sigma_s:=\Sigma(t_{s+1}|t_s), H_s^0:=\Psi_{t_{s+1}}\int_{t_s}^{t_{s+1}}\Psi_u^{-1}\mathbb{K}_u^0du,\\
&F_s^0(x):=F^0(t_{s+1}|t_s,x), \Sigma^0_s:=\Sigma^0(t_{s+1}|t_s).
\end{split}
\end{equation}
For samples $\mathcal{X}_{t_s}^i,s=0,...,k$, the log-likelihood function is 
\begin{equation}\label{eq:like_i}
\ell_i(\mathcal{E}^i)=\sum_{s=0}^{k-1}\log\mathcal{T}(t_{s+1},\mathcal{X}^i_{t_{s+1}}|t_s,\mathcal{X}^i_{t_{s}};\mathcal{E}^i),
\end{equation}
and for $\mathcal{X}_{t_s}^0,s=0,...,k$, the log-likelihood function is
\begin{equation}\label{eq:like_0}
\ell_0(\mathcal{E}^0)=\sum_{s=0}^{k-1}\log\mathcal{T}^0(t_{s+1},\mathcal{X}^0_{t_{s+1}}|t_s,\mathcal{X}^0_{t_{s}};\mathcal{E}^0),
\end{equation}
Since the major agent is only interested in $(e^0,\bar{e})$, while $\mathcal{E}^{obs}$ enters the model only as a nuisance parameter, we define the profile log-likelihood function \cite{casella2002,pawitan2001}
\[\ell_p^0(e^0,\bar{e}):=\sup_{\mathcal{E}^{obs}}\ell_0(e^0,\mathcal{E}^{obs},\bar{e}).\]
\subsection{Distributed Error Estimation based on MLE}
Define $\mathcal{A}_i$'s maximum likelihood estimation for $\mathcal{E}^i$ as $\hat{\mathcal{E}}^i$, then a minor agent $\mathcal{A}_i$ solves $\hat{\mathcal{E}}^i$ from
\begin{equation}\label{eq:estimate_i}
\hat{\mathcal{E}}^i=\arg\max_{\mathcal{E}^i\in\mathbb{R}^{3d}}\ell_i(\mathcal{E}^i).
\end{equation}
Define $\mathcal{A}_0$'s maximum likelihood estimation for $(e^0,\bar{e})$ as $(e^0_0,\bar{e}_0)$, then the major agent $\mathcal{A}_0$ solves $\hat{\mathcal{E}}^0$ from
\begin{equation}\label{eq:estimate_0}
(e^0_0,\bar{e}_0)=\arg\max_{e^0,\bar{e}\in\mathbb{R}^{d}}\ell_p^0(e^0,\bar{e}).
\end{equation}
To make sure the estimators are well-defined, we introduce the following assumption.
\begin{assumption}[Identifiability assumption]
Assume $\Sigma_s\succ0$ and $\Sigma_s^0\succ0$ for $s=0,\ldots,k-1$, and
\begin{enumerate}
\item[(i)] For each minor agent $\mathcal{A}_i$,
\[
\mathcal I_i^{(k)}(\tau):=\sum_{s=0}^{k-1} H_s^\top \Sigma_s^{-1} H_s \succ 0 .
\]
\item[(ii)] For the major agent,
\[
\mathcal I_0^{(k)}(\tau):=\sum_{s=0}^{k-1} (H_s^0)^\top (\Sigma_s^0)^{-1} H_s^0 \in \mathbb R^{(q+2)d\times(q+2)d}.
\]
Define $\theta:=\bigl((e^0)^\top,(\bar e)^\top\bigr)^\top$
and $\eta:=\mathcal E^{obs}$, and
\[
S_\theta :=
\begin{pmatrix}
I_d & 0 & 0\\
0 & 0 & I_d
\end{pmatrix},
S_\eta^\top := \begin{pmatrix} 0 \\
 I_{qd} \\
  0\end{pmatrix},
\]
\begin{equation*}
\begin{split}
&\mathcal I_{\theta\theta}^{(k)}:=S_\theta \mathcal I_0^{(k)}(\tau) S_\theta^\top,
\mathcal I_{\eta\eta}^{(k)}:=S_\eta \mathcal I_0^{(k)}(\tau) S_\eta^\top,\\
&\mathcal I_{\theta\eta}^{(k)}:=S_\theta \mathcal I_0^{(k)}(\tau) S_\eta^\top,
\mathcal I_{\eta\theta}^{(k)}:=(\mathcal I_{\theta\eta}^{(k)})^\top.
\end{split}
\end{equation*}
Assume $\mathcal I_{\eta\eta}^{(k)}\succ0$ and the effective information matrix
\[
\mathcal I_{0,\mathrm{eff}}^{(k)}(\tau)
:=
\mathcal I_{\theta\theta}^{(k)}
-\mathcal I_{\theta\eta}^{(k)}\bigl(\mathcal I_{\eta\eta}^{(k)}\bigr)^{-1}\mathcal I_{\eta\theta}^{(k)}
\succ 0.
\]
\end{enumerate}
\end{assumption}
Under this assumption, the log-likelihood is strictly concave in the parameters, and hence the estimators are uniquely defined.

Since the form of log-likelihood function $\ell_i$ can be derived only according to system parameters and samples recorded by $\mathcal{A}_i$, a minor agent $\mathcal{A}_i$ can estimate the initial errors $\mathcal{E}^i$ by solving \eqref{eq:estimate_i}. 
Also, the form of $\ell_p^0$ can be derived only according to system parameters and samples recorded by $\mathcal{A}^0$, the major agent $\mathcal{A}_0$ can estimate the initial errors $e^0,\bar{e}$ by solving \eqref{eq:estimate_0}.
\section{Estimate-Based Mean-Field Reconstruction and Strategy Modification}
In this section, we use the estimated initial errors together with the linear error-propagation relations to reconstruct the actual current mean field and modify the agents’ strategies accordingly at time $\tau$. We then analyze the effect of estimation errors on the reconstructed mean field and the modified controls.  
\subsection{Estimation of the Current Mean Field}
Recall that 
\begin{equation}
\left\{
\begin{split}
&z_t=z_t^i-\Phi_t^xe^i-(\Phi_t^x\mathcal{M}_t^0-\Phi_t^{42})e^0-(\Phi_t^x\bar{\mathcal{M}}_t-\Phi_t^{43})\bar{e},\\
&z_t=z_t^0-(\Phi_t^{22}-\Phi_t^{42})e^0-(\Phi_t^{23}-\Phi_t^{43})\bar{e}.
\end{split}
\right.
\end{equation}

For a minor agent $\mathcal{A}_i$, it has access to current updated mean field state $z_\tau^i$. Hence, using the estimator $\hat{\mathcal{E}}^i$, $\mathcal{A}_i$ can estimate current mean field state by
\begin{equation}
\hat{z}^i_\tau=z_\tau^i-C_\tau^i\hat{\mathcal{E}}^i, C_t^i=(\Phi_t^x, \Phi_t^x\mathcal{M}_t^0-\Phi_t^{42}, \Phi_t^x\bar{\mathcal{M}}_t-\Phi_t^{43}).
\end{equation}

For the major agent $\mathcal{A}_0$, it has access to $z_\tau^0$. Hence, using the estimator $e_0^0,\bar{e_0}$, $\mathcal{A}_0$ can estimate current mean field state by
\begin{equation}
\hat{z}^0_\tau=z_\tau^0-(\Phi_\tau^{22}-\Phi_\tau^{42})e^0_0-(\Phi_\tau^{23}-\Phi_\tau^{43})\bar{e}_0.
\end{equation}
\subsection{Estimate-Based Strategy Modification}
We now propose a one-shot estimate-based modification at time $\tau$. Specifically, after computing the estimator for current mean field state, each agent resets its feedback control law. Since for $\mathcal{A}_j,j\geq0$, $\hat{z}^j_\tau$ is $\mathcal{A}_j$'s only estimation of $z_\tau$, it's natural to assume $\mathcal{A}_j$ takes it as the correct current mean field state, which leads to the following assumption:
\begin{assumption}\label{assume_common}
$\mathcal{A}_j$ treats $\hat{z}_\tau^j$ as its best available estimation of the current actual mean field state, and proceeds as if other agents have the same estimation when applying strategy modifications, $j\geq0$.
\end{assumption}

Based on the above assumption, in $\mathcal{A}_j$'s belief, the game beginning at $t=\tau$ becomes a game under correct information with initial mean field state $\hat{z}_\tau^j$. Then its update law for $z_t$ changes to
\begin{equation}\label{eq:close_znew}
\left\{
\begin{split}
&d\hat{z}^j_t=\big((A+F-BR^{-1}B^\top P_t^{32})\hat{z}^j_t+(G-BR^{-1}B^\top \\
&P_t^{31})X_t^0-BR^{-1}B^\top \mathcal{G}_t^3\big)dt, \hat{z}^j_\tau=\hat{z}^j_\tau, t\in[\tau,T].
\end{split}
\right.
\end{equation}
$\mathcal{A}_0$'s estimate-based reset law is
\begin{equation}
u_t^0=-R_0^{-1}B_0^\top(P_t^{11}X_t^0+P_t^{12}\hat{z}_t^0+\mathcal{G}_t^1),
\end{equation}
and a minor agent $\mathcal{A}_i$'s estimate-based reset law is
\[u_t^i=-R^{-1}B^\top(P_t^xX_t^i+(P_t^{32}-P_t^x)\hat{z}_t^i+P_t^{31}X_t^0+\mathcal{G}_t^3).\]

The above modification should be viewed as an estimate-based reset rule rather than a new equilibrium. 
\subsection{Estimation Error Effect}
We next quantify how the estimation errors propagate to the mean field reconstruction and to the reset controls. Define
\[\tilde{\mathcal{E}}^i:=\hat{\mathcal{E}^i}-\mathcal{E}^i, \tilde{e}_0^0=e_0^0-e^0, \tilde{e}_0:=\bar{e}_0-\bar{e},\]
and 
\[\tilde{z}_t^j:=\hat{z}_t^j-z_t,j\geq0.\]
Then we have
\[\tilde{z}_\tau^i=-C_{\tau}^i\tilde{\mathcal{E}}^i, i\geq 1,\]
and
\[\tilde{z}_\tau^0=-(\Phi_\tau^{22}-\Phi_{\tau}^{42})\tilde{e}_0^0-(\Phi_{\tau}^{23}-\Phi_\tau^{43})\tilde{e}_0.\]
Define $z^{new,c}_t, X_t^{0,c1}, t\in[\tau,T]$ as the mean field state and the state of major agent when agents get actual mean field at time $\tau$. Define $\bar{z}_t:=\lim_{N\rightarrow\infty}\frac{1}{N}\sum_{i=1}^N\hat{z}_t^i$, $\tilde{\mathcal{E}}:=\lim_{N\rightarrow\infty}\frac{1}{N}\sum_{i=1}^N\tilde{\mathcal{E}}^i$ and
\begin{equation*}
\begin{split}
&\delta \hat{z}_t^j=\hat{z}_t^j-z_t^{new,c}, \delta \hat{z}_t:=\bar{z}_t-z_t^{new,c},\\ &\delta z^{new}_t:=z_t-z_t^{new,c},\delta \hat{X}_t^0=X_t^0-X_t^{0,c1}.
\end{split}
\end{equation*}
Then for $\hat{\mathcal{X}}_t:=((\delta \hat{X}_t^0)^\top,(\delta \hat{z}_t^0)^\top,(\delta \hat{z}_t)^\top,(\delta z^{new}_t)^\top)^\top$, 
\begin{equation}
\begin{split}
&d\hat{\mathcal{X}}_t=\mathbb{A}_t^0\hat{\mathcal{X}}_tdt,\hat{\mathcal{X}}_\tau=(0^\top,(\delta\hat{z}_\tau^0)^\top,(\delta\hat{z}_\tau)^\top,0^\top)^\top.
\end{split}
\end{equation}
According to the discussion in Section 3, we have
\[\hat{\mathcal{X}}_t=\Phi_t\Phi_\tau^{-1}\hat{\mathcal{X}}_\tau,\]
which means, the deviations $\delta z^{new}_t, \delta \hat{X}_t^0$ have linear relationships with the estimation errors $\tilde{\mathcal{E}}, \tilde{e}_0^0$ and $\tilde{e}_0$.
\section{Properties of the Error Estimation}  
We now turn to the initial error estimators and study their statistical properties. In particular, we characterize the estimation error distributions for both minor and major agents, and then examine how the choice of the observation set affects the major agent’s estimation precision in the present symmetric setting.
\subsection{Estimation Error Distribution}\label{subsec:V-C}
In this subsection, we characterize the estimation error distribution. Define
\[
Y_s^{\,i}:=\mathcal{X}_{t_{s+1}}^{i}-F_s(\mathcal{X}_{t_s}^{i}), Y_s^{\,0}:=\mathcal{X}_{t_{s+1}}^{0}-F_s^0(\mathcal{X}_{t_s}^{0}).
\]
By Theorem~4.1, for each $s$,
\begin{equation}
Y_s^{i}=-H_s\mathcal{E}^i+\varepsilon_s^{i}, \varepsilon_s^{i}\sim\mathcal N(0,\Sigma_s), 
\label{eq:linreg-minor}
\end{equation}
\begin{equation}
Y_s^{0}=-H_s^0\mathcal{E}^0+\varepsilon_s^{0},\varepsilon_s^{0}\sim\mathcal N(0,\Sigma_s^0).
\label{eq:linreg-major}
\end{equation}
Define
\begin{equation}
\begin{split}
&Y^{i}:=\begin{pmatrix}Y_0^{i}\\ \vdots\\ Y_{k-1}^{i}\end{pmatrix},
\varepsilon^{i}:=\begin{pmatrix}\varepsilon_0^{\,i}\\ \vdots\\ \varepsilon_{k-1}^{\,i}\end{pmatrix},
\mathsf H:=\begin{pmatrix}H_0\\ \vdots\\ H_{k-1}\end{pmatrix},\\
&\mathsf\Sigma:=\mathrm{diag}(\Sigma_0,\ldots,\Sigma_{k-1}),
\end{split}
\end{equation}
and similarly $Y^{\,0},\varepsilon^{\,0},\mathsf H^0,\mathsf\Sigma^0$ for the major agent.
Then \eqref{eq:linreg-minor}--\eqref{eq:linreg-major} become
\begin{equation}\label{eq:Y}
\begin{split}
&Y^{i}=-\mathsf H\mathcal{E}^i+\varepsilon^{i},\varepsilon^{i}\sim \mathcal N(0,\mathsf\Sigma),\\
&Y^{0}=-\mathsf H^0\mathcal{E}^0+\varepsilon^{0},\varepsilon^{0}\sim \mathcal N(0,\mathsf\Sigma^0).
\end{split}
\end{equation}

\begin{theorem}[minor agents]\label{thm:finite-minor}
Under Assumption~4.1(i), the estimated error $\hat{\mathcal{E}}_i$ satisfies
\begin{equation}
\hat{\mathcal{E}}_i-\mathcal{E}_i \sim \mathcal N\!\left(0,\ \bigl(\mathcal I_i^{(k)}(\tau)\bigr)^{-1}\right).
\label{eq:dist-minor-mle}
\end{equation}
\end{theorem}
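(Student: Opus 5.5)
The argument rests on the stacked linear--Gaussian regression \eqref{eq:Y}, $Y^{i}=-\mathsf H\mathcal{E}^i+\varepsilon^{i}$. The MLE \eqref{eq:estimate_i} is then a generalized least-squares estimator, so its error is a fixed linear map of the Gaussian vector $\varepsilon^i$.

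First, I rewrite $\ell_i$ from \eqref{eq:like_i} using Theorem~\ref{thm:dens}. Each summand is a Gaussian log-density with mean $F_s(\mathcal{X}^i_{t_s})-H_s\mathcal{E}^i$ and covariance $\Sigma_s$, and $\Sigma_s$ does not depend on $\mathcal{E}^i$. So, up to an additive constant independent of $\mathcal{E}^i$,
\[
\ell_i(\mathcal{E}^i)=-\tfrac12\sum_{s=0}^{k-1}(Y_s^i+H_s\mathcal{E}^i)^\top\Sigma_s^{-1}(Y_s^i+H_s\mathcal{E}^i)
=-\tfrac12 (Y^i+\mathsf H\mathcal{E}^i)^\top\mathsf\Sigma^{-1}(Y^i+\mathsf H\mathcal{E}^i).
\]
This is a quadratic with Hessian $-\mathsf H^\top\mathsf\Sigma^{-1}\mathsf H=-\mathcal I_i^{(k)}(\tau)$, which is negative definite by Assumption~4.1(i). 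Setting the gradient to zero therefore gives the unique maximizer
\[
\hat{\mathcal{E}}^i=-\bigl(\mathcal I_i^{(k)}(\tau)\bigr)^{-1}\mathsf H^\top\mathsf\Sigma^{-1}Y^i .
\]
Substituting $Y^i=-\mathsf H\mathcal{E}^i+\varepsilon^i$ yields
\[
\hat{\mathcal{E}}^i-\mathcal{E}^i=-\bigl(\mathcal I_i^{(k)}(\tau)\bigr)^{-1}\mathsf H^\top\mathsf\Sigma^{-1}\varepsilon^i .
\]

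Next, I establish that $\varepsilon^i\sim\mathcal N(0,\mathsf\Sigma)$ jointly. This needs more than the marginal statement in \eqref{eq:linreg-minor}. By \eqref{eq:sol_obs}, $\varepsilon_s^i=\Xi_{t_{s+1}}\int_{t_s}^{t_{s+1}}\Xi_u^{-1}\mathbb D\,d\mathbb W_u^i$ is a Wiener integral of a deterministic integrand over the interval $[t_s,t_{s+1}]$. Because these intervals are disjoint and Brownian increments on disjoint intervals are independent, the $\varepsilon_s^i$ are independent centered Gaussians with covariances $\Sigma_s$. Hence $\varepsilon^i\sim\mathcal N(0,\mathsf\Sigma)$ with block-diagonal $\mathsf\Sigma$.

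I also need that $H_s$, $\Sigma_s$ and the map $F_s$ are deterministic. They are built from $P,\mathcal G,P^x,\Phi,\Phi^x,\Xi$, which are solutions of ODEs depending only on the system parameters. The conditioning on $\mathcal{X}^i_{t_s}$ inside $Y_s^i$ is handled by this same Wiener-integral representation: $\varepsilon^i_s$ is exactly the noise increment, whatever value $\mathcal{X}^i_{t_s}$ takes.

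Since a linear image of a Gaussian vector is Gaussian, $\hat{\mathcal{E}}^i-\mathcal{E}^i$ is centered Gaussian. Writing $\mathcal I:=\mathcal I_i^{(k)}(\tau)$, its covariance is
\[
\mathcal I^{-1}\mathsf H^\top\mathsf\Sigma^{-1}\,\mathsf\Sigma\,\mathsf\Sigma^{-1}\mathsf H\,\mathcal I^{-1}=\mathcal I^{-1}\mathcal I\,\mathcal I^{-1}=\mathcal I^{-1},
\]
which is \eqref{eq:dist-minor-mle}. The main obstacle I expect is the joint-independence step, which relies on the deterministic-coefficient structure; the rest is routine GLS algebra.
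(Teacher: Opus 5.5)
Your proof is correct and follows the paper's argument: you write $\ell_i$ as the generalized least-squares quadratic form from the stacked regression \eqref{eq:Y}, solve the normal equations, and compute the sandwich covariance $\mathcal I^{-1}\mathsf H^\top\mathsf\Sigma^{-1}\mathsf\Sigma\,\mathsf\Sigma^{-1}\mathsf H\,\mathcal I^{-1}=\mathcal I^{-1}$. You also justify the joint law $\varepsilon^i\sim\mathcal N(0,\mathsf\Sigma)$ through independent Wiener integrals over disjoint sampling intervals, which the paper takes for granted, and you keep the minus sign in $\hat{\mathcal E}^i-\mathcal E^i=-\mathcal I^{-1}\mathsf H^\top\mathsf\Sigma^{-1}\varepsilon^i$; the paper drops this sign, which is harmless because $\varepsilon^i$ is symmetric.
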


\begin{proof}
By (35), the log-likelihood of $\mathcal{E}_i$ is
\[
\ell_i(\mathcal{E}^i)
=
-\frac12\bigl(Y^{i}+\mathsf H \mathcal{E}^i\bigr)^\top \mathsf\Sigma^{-1}\bigl(Y^{i}+\mathsf H \mathcal{E}^i\bigr).
\]
Assumption~4.1(i) is equivalent to $\mathsf H^\top\mathsf\Sigma^{-1}\mathsf H=\mathcal I_i^{(k)}(\tau)\succ 0$,
hence $\ell_i$ is strictly concave and admits a unique maximizer.
Setting the gradient to zero yields
\[
\mathsf H^\top\mathsf\Sigma^{-1}\mathsf H\,\hat{\mathcal{E}}^i = -\mathsf H^\top\mathsf\Sigma^{-1}Y^{i}.
\]
Then we have
\[
\hat{\mathcal{E}}^i-\mathcal{E}^i
=
\bigl(\mathsf H^\top\mathsf\Sigma^{-1}\mathsf H\bigr)^{-1}\mathsf H^\top\mathsf\Sigma^{-1}\varepsilon^{i},
\]
the right-hand side is Gaussian with mean $0$ and covariance
\[
\bigl(\mathsf H^\top\mathsf\Sigma^{-1}\mathsf H\bigr)^{-1}\mathsf H^\top\mathsf\Sigma^{-1}\mathsf\Sigma\,\mathsf\Sigma^{-1}\mathsf H
\bigl(\mathsf H^\top\mathsf\Sigma^{-1}\mathsf H\bigr)^{-1}
=
\bigl(\mathcal I_i^{(k)}(\tau)\bigr)^{-1}.
\]
This proves \eqref{eq:dist-minor-mle}.
\end{proof}

\begin{theorem}[major agent]\label{thm:finite-major}
Under Assumption~4.1(ii), the estimator $\hat\theta$ satisfies
\[
\hat\theta-\theta \sim \mathcal N\!\left(0,\ \bigl(\mathcal I_{0,\mathrm{eff}}^{(k)}(\tau)\bigr)^{-1}\right).
\]
\end{theorem}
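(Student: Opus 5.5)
The argument parallels the proof of Theorem~\ref{thm:finite-minor}, with an extra profiling step for the nuisance parameter $\eta=\mathcal E^{obs}$. By \eqref{eq:Y} and the Gaussian densities in Theorem~\ref{thm:dens}, the log-likelihood of $\mathcal E^0$ equals, up to an additive constant,
\[
\ell_0(\mathcal E^0)=-\tfrac12\bigl(Y^0+\mathsf H^0\mathcal E^0\bigr)^\top(\mathsf\Sigma^0)^{-1}\bigl(Y^0+\mathsf H^0\mathcal E^0\bigr),
\]
a concave quadratic with Hessian $-\mathcal I_0^{(k)}(\tau)=-(\mathsf H^0)^\top(\mathsf\Sigma^0)^{-1}\mathsf H^0$. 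I would write $\mathcal E^0=S_\theta^\top\theta+S_\eta^\top\eta$. Since $S_\theta,S_\eta$ select complementary coordinate blocks, we have $S_\theta^\top S_\theta+S_\eta^\top S_\eta=I$. Splitting $\mathsf H^0$ into $\mathsf H_\theta:=\mathsf H^0S_\theta^\top$ and $\mathsf H_\eta:=\mathsf H^0S_\eta^\top$ then reproduces the blocks $\mathcal I^{(k)}_{\theta\theta},\mathcal I^{(k)}_{\theta\eta},\mathcal I^{(k)}_{\eta\eta}$ of Assumption~4.1(ii).

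First, I profile out $\eta$. For fixed $\theta$, $\ell_0$ is strictly concave in $\eta$ because $\mathcal I_{\eta\eta}^{(k)}\succ0$. Setting the $\eta$-gradient to zero gives
\[
\hat\eta(\theta)=-(\mathcal I^{(k)}_{\eta\eta})^{-1}\mathsf H_\eta^\top(\mathsf\Sigma^0)^{-1}(Y^0+\mathsf H_\theta\theta).
\]
Substituting back shows that $\ell^0_p(\theta)$ is a quadratic in $\theta$ whose Hessian is minus the Schur complement, namely $-\mathcal I^{(k)}_{0,\mathrm{eff}}(\tau)$. This Hessian is negative definite by assumption, so $\hat\theta=(e^0_0,\bar e_0)$ is unique. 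Equivalently, $(\hat\theta,\hat\eta(\hat\theta))$ is the joint maximizer of $\ell_0$, and I can work with the joint normal equations in block form:
\[
\begin{pmatrix}\mathcal I_{\theta\theta}&\mathcal I_{\theta\eta}\\ \mathcal I_{\eta\theta}&\mathcal I_{\eta\eta}\end{pmatrix}\begin{pmatrix}\hat\theta\\ \hat\eta\end{pmatrix}=-\begin{pmatrix}\mathsf H_\theta^\top\\ \mathsf H_\eta^\top\end{pmatrix}(\mathsf\Sigma^0)^{-1}Y^0 .
\]

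Second, I substitute $Y^0=-\mathsf H_\theta\theta-\mathsf H_\eta\eta+\varepsilon^0$. The true parameter solves the noiseless system exactly, so the errors $(\hat\theta-\theta,\hat\eta-\eta)$ solve the same block system with right-hand side $(\mathsf H_\theta,\mathsf H_\eta)^\top(\mathsf\Sigma^0)^{-1}\varepsilon^0$. Eliminating the $\eta$-block by block Gaussian elimination gives
\[
\hat\theta-\theta=(\mathcal I^{(k)}_{0,\mathrm{eff}})^{-1}L\,\varepsilon^0,\qquad L:=\bigl(\mathsf H_\theta^\top-\mathcal I_{\theta\eta}\mathcal I_{\eta\eta}^{-1}\mathsf H_\eta^\top\bigr)(\mathsf\Sigma^0)^{-1}.
\]
This is a linear map of $\varepsilon^0\sim\mathcal N(0,\mathsf\Sigma^0)$, so $\hat\theta-\theta$ is Gaussian with mean zero.

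The main computation, and the only real obstacle, is the covariance: I need $L\mathsf\Sigma^0L^\top=\mathcal I^{(k)}_{0,\mathrm{eff}}$. Expanding gives
\[
L\mathsf\Sigma^0L^\top=\mathcal I_{\theta\theta}-2\,\mathcal I_{\theta\eta}\mathcal I_{\eta\eta}^{-1}\mathcal I_{\eta\theta}+\mathcal I_{\theta\eta}\mathcal I_{\eta\eta}^{-1}\mathcal I_{\eta\eta}\mathcal I_{\eta\eta}^{-1}\mathcal I_{\eta\theta}=\mathcal I^{(k)}_{0,\mathrm{eff}}.
\]
Hence the covariance of $\hat\theta-\theta$ is $(\mathcal I_{0,\mathrm{eff}}^{(k)})^{-1}\mathcal I_{0,\mathrm{eff}}^{(k)}(\mathcal I_{0,\mathrm{eff}}^{(k)})^{-1}=(\mathcal I^{(k)}_{0,\mathrm{eff}}(\tau))^{-1}$, which proves the claim. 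The point that needs care is that only $\mathcal I_{\eta\eta}\succ0$ and $\mathcal I_{0,\mathrm{eff}}\succ0$ are assumed, not $\mathcal I_0^{(k)}\succ0$ directly. These two conditions together do imply $\mathcal I_0^{(k)}\succ0$ via the Schur complement lemma, which justifies the block inversion.
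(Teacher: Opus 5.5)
Your argument is correct and is essentially the paper's. Both proofs reduce to the GLS normal equations for the stacked regression and identify the covariance of the $\theta$-part with $(\mathcal I^{(k)}_{0,\mathrm{eff}}(\tau))^{-1}$; the paper reads off the $\theta\theta$-block of $(\mathcal I_0^{(k)}(\tau))^{-1}$, while you use explicit block elimination, and you also justify $\mathcal I_0^{(k)}(\tau)\succ0$ from $\mathcal I^{(k)}_{\eta\eta}\succ0$ and $\mathcal I^{(k)}_{0,\mathrm{eff}}(\tau)\succ0$, which the paper's strict-concavity claim uses without comment. One harmless slip: substituting $Y^0=-\mathsf H^0\mathcal E^0+\varepsilon^0$ gives the right-hand side $-(\mathsf H_\theta,\mathsf H_\eta)^\top(\mathsf\Sigma^0)^{-1}\varepsilon^0$, hence $\hat\theta-\theta=-(\mathcal I^{(k)}_{0,\mathrm{eff}})^{-1}L\varepsilon^0$, which has the same law because $\varepsilon^0$ is centered Gaussian (the paper's proof of Theorem~\ref{thm:finite-minor} drops the same sign).
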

\begin{proof}
By \eqref{eq:like_0}, the full log-likelihood for $\mathcal{E}^0$ is quadratic and strictly concave under Assumption~4.1(ii),
hence the full MLE $\hat{\mathcal E}^0$ is unique and satisfies
\[
\hat{\mathcal E}^0-\mathcal E^0
=
\bigl((\mathsf H^0)^\top(\mathsf\Sigma^0)^{-1}\mathsf H^0\bigr)^{-1}(\mathsf H^0)^\top(\mathsf\Sigma^0)^{-1}\varepsilon^{0}.
\]
Therefore $\hat{\mathcal E}^0-\mathcal E^0$ is Gaussian with covariance
$(\mathcal I_0^{(k)}(\tau))^{-1}$.

Applying $\theta=S_\theta\mathcal E^0$ and $\eta=S_\eta\mathcal E^0$, the covariance of $\hat{\theta}-\theta$
is $(\mathcal I_{0,\mathrm{eff}}^{(k)}(\tau))^{-1}$, and hence
$\hat{\theta}-\theta\sim\mathcal N\bigl(0,(\mathcal I_{0,\mathrm{eff}}^{(k)}(\tau))^{-1}\bigr)$.

Since maximizing $\eta$ in the quadratic log-likelihood yields the same estimator as the $\theta$-component of the full MLE, the profile MLE $\hat\theta$ has the same distribution.
\end{proof}
\subsection{Index-Invariance at Fixed Observation Budget}\label{subsec:V-D}
The following proposition shows that, the estimation accuracy of $\theta$ is independent from specific choice of $\mathcal I_{\mathrm{obs}}$ with a fixed size $q$.
\begin{proposition}\label{prop:index-invariance-fixedq}
Let $\mathcal I_{\mathrm{obs}}=\{j_1,\ldots,j_q\}$ and $\mathcal I_{\mathrm{obs}}'=\{j_1',\ldots,j_q'\}$ be any two observation sets
with the same cardinality $q$. We have
\[
\mathcal I_{0,\mathrm{eff}}^{(k)}(\tau;\mathcal I_{\mathrm{obs}})
=
\mathcal I_{0,\mathrm{eff}}^{(k)}(\tau;\mathcal I_{\mathrm{obs}}').
\]
\end{proposition}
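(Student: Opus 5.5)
The plan is to show that every ingredient entering $\mathcal I_{0,\mathrm{eff}}^{(k)}(\tau)$ depends on $\mathcal I_{\mathrm{obs}}$ only through $q=|\mathcal I_{\mathrm{obs}}|$, never through the labels $j_1,\ldots,j_q$. By definition,
\[
\mathcal I_{0,\mathrm{eff}}^{(k)}(\tau)=\mathcal I_{\theta\theta}^{(k)}-\mathcal I_{\theta\eta}^{(k)}\bigl(\mathcal I_{\eta\eta}^{(k)}\bigr)^{-1}\mathcal I_{\eta\theta}^{(k)},
\]
and each block is obtained from $\mathcal I_0^{(k)}(\tau)=\sum_{s}(H_s^0)^\top(\Sigma_s^0)^{-1}H_s^0$ through the fixed selectors $S_\theta,S_\eta$. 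These selectors depend only on $d$ and $q$. It therefore suffices to prove that $H_s^0$ and $\Sigma_s^0$ coincide for the two observation sets.

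\textbf{Step 1.} Inspect the coefficients of the major agent's model \eqref{eq:par_0}. The matrices $\mathbb{A}_t^2$, $\beta_t^0$, $\mathbb{D}^0$ and $\mathbb{K}_t^0$ are assembled from the blocks $g_t^{obs}=I^q(G-MP_t^{31})$, $f_t^{obs}$, $\mathbb{A}_t^{obs}=\mathrm{diag}(a_t^1,\ldots,a_t^1)$, $\mathbb{D}^{obs}=\mathrm{diag}(D,\ldots,D)$, $\mathbb{K}_t^{obs}=\mathrm{diag}(k_t^1,\ldots,k_t^1)$, $I^q\mathcal K_t^1$ and $I^q\bar{\mathcal K}_t^1$. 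Each of these is a $q$-fold repetition of a block built from the common system parameters, $P$, $P^x$, $\mathcal G$, $\Phi$, $\Phi^x$, $\mathcal M^0$ and $\bar{\mathcal M}$. None of those quantities carries a minor-agent index: all minors share the same $A,B,D,F,G,Q,R,\ldots$, and the private errors $e^{j}$ enter only through the parameter vector $\mathcal E^{obs}$, not through the coefficients. The one place an identity might seem to enter is the driving noise $\mathcal W^{obs}$. However, the $W^{j}$ are i.i.d.\ standard Brownian motions independent of $W^0$, so only the covariance $\mathbb D^0(\mathbb D^0)^\top$ matters, and that covariance is identical for both sets.

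\textbf{Step 2.} Since the coefficients are identical deterministic functions of $t$, the fundamental matrix $\Psi_t$ solving $d\Psi_t=\mathbb A_t^2\Psi_tdt$ is the same for both sets. Theorem~\ref{thm:dens} then gives the same $\Sigma^0(t|s)$. Hence $\Sigma_s^0$ and $H_s^0=\Psi_{t_{s+1}}\int_{t_s}^{t_{s+1}}\Psi_u^{-1}\mathbb K_u^0du$ agree for every $s$, and so do $\mathcal I_0^{(k)}(\tau)$, all of its sub-blocks, and the Schur complement. Assumption~4.1(ii) ensures the inverse of $\mathcal I_{\eta\eta}^{(k)}$ exists, so the Schur complement is well defined for both sets.

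\textbf{Main obstacle.} The delicate point is making rigorous that ``relabelling'' does not change the model, i.e.\ that the ordering $j_1<\cdots<j_q$ versus $j'_1<\cdots<j'_q$ is immaterial. I would handle this by noting that \eqref{eq:par_0} is written with blocks indexed by position $1,\ldots,q$, not by agent label. Equivalently, fix the bijection $j_r\mapsto j'_r$; the induced block-permutation matrix acting on the $\mathcal X^{obs}$ and $\mathcal E^{obs}$ coordinates is the identity on the model's coefficient matrices, by the block-diagonal and $I^q$-repeated structure. Even for an arbitrary permutation $\Pi$ of the $q$ blocks, $\Pi$ commutes with $\mathbb A^{obs}_t$, $\mathbb D^{obs}$ and $\mathbb K^{obs}_t$ and fixes $I^q$. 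Consequently $\mathcal I_{\eta\eta}$ is conjugated by $\Pi$ and $\mathcal I_{\theta\eta}$ is multiplied on the right by $\Pi^\top$, and these cancel in the Schur complement. Either route yields the claimed equality.
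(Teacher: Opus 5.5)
Your proposal is correct and follows essentially the same route as the paper's proof sketch: the coefficient matrices of the major agent's model \eqref{eq:par_0} are block repetitions that depend only on $q$, so $\Psi_t$, $H_s^0$, $\Sigma_s^0$, and hence $\mathcal I_0^{(k)}(\tau)$ and its Schur complement, coincide for any two observation sets of equal size. Your additional block-permutation argument only spells out why the ordering of the indices is immaterial. It uses that permutations commute with the block-diagonal coefficients, fix $I^q$, and cancel in the Schur complement, so it does not change the underlying argument.
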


\begin{proofsketch}
In \eqref{eq:par_0}, the observed minors enter only through
$\mathcal X_t^{obs}$, $\mathcal E^{obs}$ and
$\mathcal{W}_t^{obs}$,
while all coefficient matrices (e.g., $\mathbb{A}_t^2,\mathbb{D}^0,\mathbb{K}_t^0$) are defined by block repetition and depend
only on $q$.

Therefore, replacing $\mathcal I_{\mathrm{obs}}$ by another set $\mathcal I_{\mathrm{obs}}'$ of the same size $q$ does not change the Fisher information for $\mathcal{E}^0$, and $\mathcal I_{0,\mathrm{eff}}^{(k)}(\tau;\mathcal I_{\mathrm{obs}})=\mathcal I_{0,\mathrm{eff}}^{(k)}(\tau;\mathcal I_{\mathrm{obs}}')$.
\end{proofsketch}
\section{Simulation}
We next illustrate the proposed estimation-modification framework through a representative simulation example. 
We set $T=2$, $\tau=1$, $q=10$, $\mathcal I_{\mathrm{obs}}=\{1,2,\dots,10\}$,
and $k=50,t_s=0.02s$.

The system parameters are chosen as
\begin{equation*}
\begin{split}
&A=0.3, B=0.7, G=-0.8, F=0.2,F_0=0.6,A_0=0.7,\\
&B_0=0.8,D=0.05,D_0=0.03, Q=0.1,Q_0=0.3,R=1,\\
&R_0=1,\gamma=0.5,\gamma_0=1.2,\eta=0.1,\eta_0=0.1,\lambda=1,\bar Q=\\
&0.2,\bar Q_0=0.1,\bar\gamma=0.5,\bar\gamma_0=1.2,\bar\eta=0.1, \bar\eta_0=0.1,\bar\lambda=0.3.
\end{split}
\end{equation*}

We select $100$ minor agents from an infinite number of agents. Their initial states are sampled from $\mathcal N(0,0.01)$. 
The private initial errors are sampled from $\mathcal N(6,0.5)$, the mean error is $\bar{e}=6$ and the major error is $e^0=2$. 
\begin{figure}[t]
    \centering
    \begin{subfigure}[t]{0.48\linewidth}
        \centering
        \includegraphics[width=\linewidth]{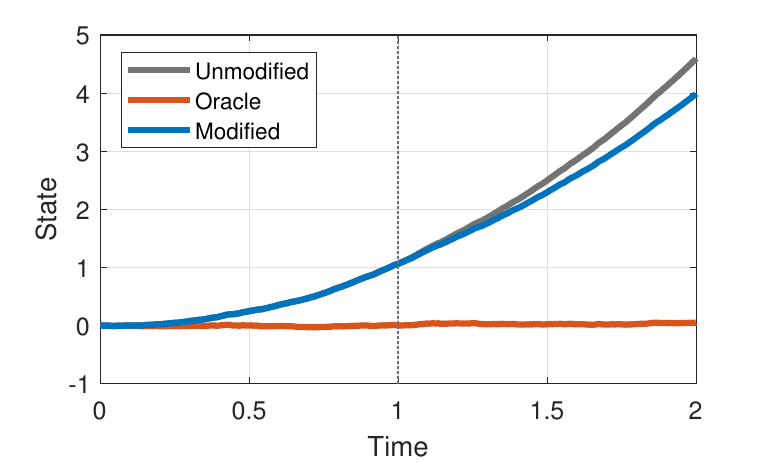}
    \end{subfigure}
    \hfill
    \begin{subfigure}[t]{0.48\linewidth}
        \centering
        \includegraphics[width=\linewidth]{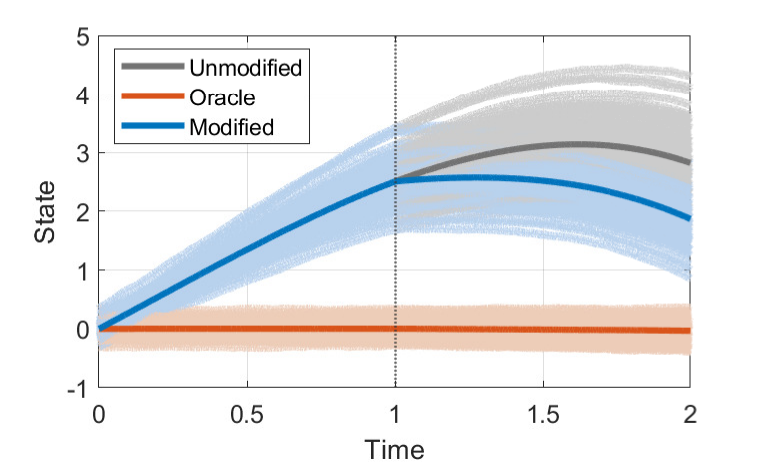}
    \end{subfigure}
    \caption{Trajectory comparison under oracle, unmodified, and modified strategies. 
    (a) The major-state trajectory; the vertical dotted line indicates the modification time.  
    (b) The minor-state trajectories together with the mean-field evolution.}
    \label{fig:traj_correction}
\end{figure}
Figure~\ref{fig:traj_correction} reports the trajectory-level effect of the proposed modification mechanism. 
After $t=\tau$, the modified trajectory moves visibly toward the oracle benchmark, whereas the unmodified trajectory continues to deviate because of the mismatch in the initial information. 
This phenomenon is observed both at the major-agent level and at the population level, indicating that the proposed strategy modification effectively mitigates the propagation of initial errors.

\begin{figure}[t]
    \centering
    \begin{subfigure}[t]{0.5\linewidth}
        \centering
        \includegraphics[width=\linewidth]{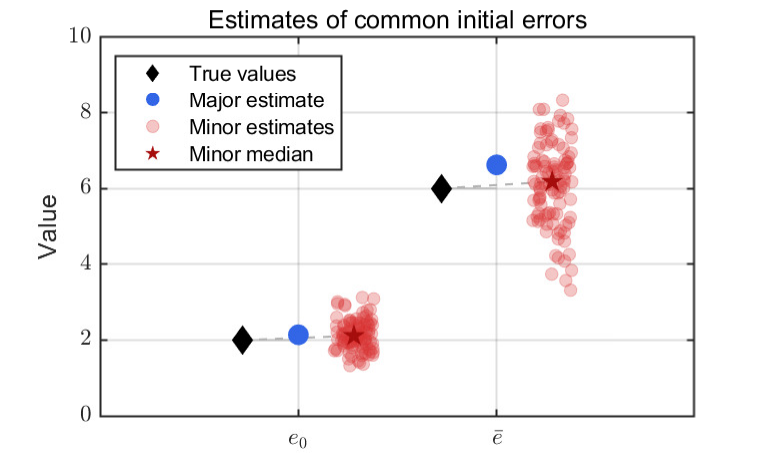}
    \end{subfigure}
    \hfill
    \begin{subfigure}[t]{0.48\linewidth}
        \centering
        \includegraphics[width=\linewidth]{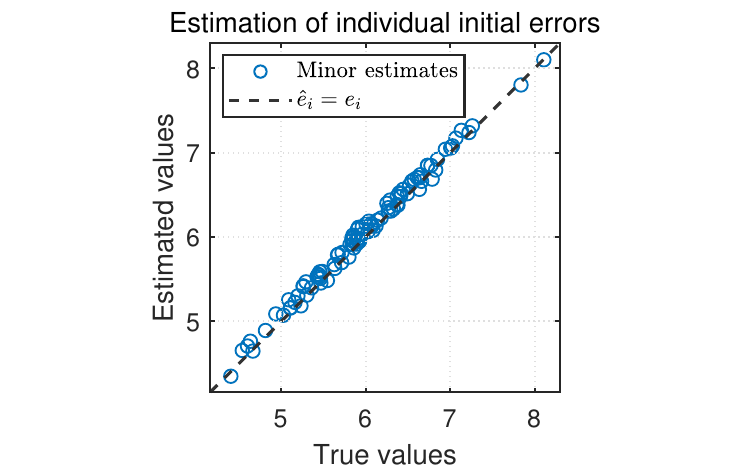}
    \end{subfigure}
    \caption{Estimation results for the initial errors. 
    (a) The estimates of the common initial errors $e_0$ and $\bar e$, including the ground truth, the major-agent estimate, and the minor-agent estimates. 
    (b) The true versus estimated private initial errors $e_i$; the diagonal line corresponds to perfect estimation.}
    \label{fig:error_estimation}
\end{figure}
To explain this improvement, Figure~\ref{fig:error_estimation} presents the associated estimation results. 
The left panel shows the estimates of the common initial errors, including $e_0$ and $\bar e$. 
The estimate obtained by the major agent is close to the ground truth, and the minor-agent estimates are concentrated around the true values. 
The right panel reports the estimation accuracy for the individual initial errors through a scatter plot of the true values versus the estimated values of $e_i$. 
The points lie close to the diagonal line, showing that the proposed estimator accurately recovers the individual error realizations. 
\section{Conclusion}
This paper studies major-minor LQ mean field games under erroneous initial information, focusing on the induced error propagation, discrete-time error estimation, and an estimate-based intermediate-time reset mechanism. Future work includes analyzing the limiting behavior of the estimation precision under increasingly frequent observations, and extending the framework to repeated modifications and more general nonlinear models.

\end{document}